\newtheorem{Lemma}{Lemma}
\newtheorem{Theorem}{Theorem}
\newtheorem{Definition}{Definition}
\newcommand{\Z}{\mathbb{Z}}
\newcommand{\C}{\mathbb{C}} 
\newcommand{\Q}{\mathbb{Q}} 
\begin{document}
\title{On the Probability of Relative Primality in the Gaussian Integers}
\author{Bianca De Sanctis and Samuel Reid}
\maketitle

\begin{abstract}
This paper studies the interplay between probability, number theory, and geometry in the context of relatively prime integers in the ring of integers of a number field. In particular, probabilistic ideas are coupled together with integer lattices and the theory of zeta functions over number fields in order to show that
$$P(\gcd(z_{1},z_{2})=1) = \frac{1}{\zeta_{\Q(i)}(2)}$$
where $z_{1},z_{2} \in \mathbb{Z}[i]$ are randomly chosen and $\zeta_{\Q(i)}(s)$ is the Dedekind zeta function over the Gaussian integers. Our proof outlines a lattice-theoretic approach to proving the generalization of this theorem to arbitrary number fields that are principal ideal domains.
\end{abstract}

\section{Introduction}
Number theory is historically defined as the study of the integers and is one of the oldest fields of mathematics to be developed with sophistication. Throughout antiquity, multiple mutually exclusive civilizations developed basic mathematical notions regarding shape and number. By the $2^{nd}$ century BC, ancient Egyptian and Babylonian mathematicians were able to find the positive real roots of equations of the form $x^2 + bx = c$ through arithmetical and geometrical procedures. During the $1^{st}$ century BC in India, the concept of zero, negative numbers and the decimal number system were developed; and during the $1^{st}$ century AD in China, methods for approximating $\pi$ and performing the division algorithm were developed. Slightly later in Greece, Books VII to IX of Euclid's \textit{Elements} defined prime numbers, divisibility, and gave the first well-documented study of relative primality. Extensions of Euclid's work were developed during the Islamic Golden Age that took place from the $9^{th}$ to $12^{th}$ century and much of the remaining knowledge from Babylonian, Egyptian, and Greek mathematics was compiled by Arab scholars \cite{Berggren}. Translations of these writings subsequently spread to Europe during the Middle Ages and provided European mathematicians the foundation to work for centuries on contributing to the growing body of knowledge of mathematics.

As a result of the work of generations of mathematicians in Asian, Arab, and European cultures, research regarding the Riemann zeta function $$\zeta(s) = \sum_{n=1}^{\infty} \frac{1}{n^{s}}$$ was first published by Leonhard Euler in the $18^{th}$ century. In particular, Euler proved that
$$\zeta(s) = \prod_{p \; \text{prime}} \left(\frac{1}{1-p^{-s}}\right)$$
which implies that there are infinitely many primes; this form of $\zeta(s)$ is known as the Euler product and is frequently the form of $\zeta(s)$ that is used in the context of number theory. This important function of a complex variable $s$ has been seen to be ubiquitous in modern mathematics, making appearances in higher-dimensional sphere packing problems \cite{Ball01011992}, the Zipf-Mandelbrot law in probability theory, the Casimir effect, and other disparate areas of mathematics and physics. Unexpectedly, $\zeta(s)$ appears in the probabilistic study of relatively prime integers, as we have the following theorem originally proved by Ernesto Ces\'{a}ro in 1881.

\begin{Theorem}\label{theorem1}
Let $x,y \in \mathbb{Z}$ be randomly chosen. Then,
$$P(\gcd(x,y)=1) = \frac{1}{\zeta(2)}$$
\end{Theorem}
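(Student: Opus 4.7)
The plan is to interpret $P(\gcd(x,y)=1)$ as a limit of finite uniform probabilities, since no translation-invariant uniform probability measure exists on $\Z$. Set
$$P_N(\gcd = k) = \frac{|\{(x,y) \in \{1,\ldots,N\}^2 : \gcd(x,y)=k\}|}{N^2},$$
and define $P(\gcd = k) := \lim_{N \to \infty} P_N(\gcd = k)$, assuming the limit exists.

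The first step is the scaling observation: if $\gcd(x,y) = d$, then $(x,y) = d(x',y')$ with $\gcd(x',y')=1$ and $(x',y') \in \{1,\ldots,\lfloor N/d \rfloor\}^2$, so
$$P_N(\gcd = d) = \frac{\lfloor N/d \rfloor^2}{N^2} \cdot P_{\lfloor N/d \rfloor}(\gcd = 1).$$
Passing to the limit gives $P(\gcd = d) = (1/d^2)\, P(\gcd = 1)$. The second step partitions $\Z_{>0}^2$ by the value of the gcd, yielding $\sum_{d=1}^{\infty} P(\gcd = d) = 1$, which combined with the scaling identity produces
$$1 = P(\gcd = 1) \sum_{d=1}^{\infty} \frac{1}{d^2} = P(\gcd = 1) \cdot \zeta(2),$$
and therefore $P(\gcd(x,y) = 1) = 1/\zeta(2)$.

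The main obstacle is justifying existence of the limit $P(\gcd = 1)$ and the interchange of limit with an infinite sum. A cleaner route avoiding this difficulty is to invoke M\"obius inversion directly: since $\lfloor N/d \rfloor^2$ counts pairs in $\{1,\ldots,N\}^2$ both of whose coordinates are divisible by $d$, inclusion-exclusion over prime divisors gives
$$V(N) := |\{(x,y) \in \{1,\ldots,N\}^2 : \gcd(x,y)=1\}| = \sum_{d=1}^{N} \mu(d) \lfloor N/d \rfloor^2.$$
Using the elementary approximation $\lfloor N/d \rfloor^2 = N^2/d^2 + O(N/d)$, together with the identity $\sum_{d=1}^{\infty} \mu(d)/d^2 = 1/\zeta(2)$ (a corollary of Euler's product formula), one obtains $V(N)/N^2 = 1/\zeta(2) + O(\log N / N)$, establishing both the existence of the limit and its value. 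I expect this effective tail estimate to be the model for the Gaussian integer generalization, where $\lfloor N/d \rfloor^2$ is replaced by a lattice-point count inside a scaled fundamental domain of the ideal $(d) \subset \Z[i]$.
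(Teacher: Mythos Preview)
Your proof is correct, but it follows a genuinely different route from the paper's. The paper gives the short heuristic argument via the Euler product: it asserts that for each prime $p$ the event ``$p\mid x$ and $p\mid y$'' has probability $1/p^2$, treats these events as independent over distinct primes, and multiplies to obtain $\prod_p (1-p^{-2}) = \zeta(2)^{-1}$. Your argument instead works with the \emph{series} form of $\zeta(2)$: you stratify by the value $d=\gcd(x,y)$ (rather than by prime divisors), use the scaling $P(\gcd=d)=d^{-2}P(\gcd=1)$, and sum over $d$; and in your rigorous version you replace this by M\"obius inversion with an explicit $O((\log N)/N)$ error term. What the paper's approach buys is brevity and a transparent link to the Euler product, which is exactly the structure it later exploits when splitting Gaussian primes into inert, split, and ramified factors in Theorem~\ref{theorem4}. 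What your approach buys is rigor---you actually define the probability as a natural density and prove the limit exists---and an effective error bound; your closing remark about replacing $\lfloor N/d\rfloor^2$ by a lattice-point count in a fundamental domain of $(d)\subset\Z[i]$ is indeed the right template for a rigorous version of the paper's main theorem, though the paper itself stays at the heuristic level throughout.
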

\begin{proof}
We have that $P(\gcd(x,y)=1) = P(p \nmid x \cap p \nmid y, \forall p)$, where $p$ is prime. For a fixed prime $p$, the probability that $p \mid x$ and $p \mid y$ is $\frac{1}{p^2}$ and so we have that the probability this does not occur is $1 - \frac{1}{p^2}$. Therefore, taking the product over all primes we have that
$$P(\gcd(x,y)=1) = \prod_{p \; \text{prime}} \left(1 - \frac{1}{p^2}\right) = \left(\prod_{p \; \text{prime}} \left(\frac{1}{1-p^{-2}}\right) \right)^{-1} = \frac{1}{\zeta(2)}$$
\end{proof}

We remark that Euler also proved that $\zeta(2) = \frac{\pi^2}{6}$ as the solution to the Basel problem, which gives us that $P(\gcd(x,y)=1) = \frac{6}{\pi^2}$. This theorem can be generalized, in addition to rigorously defining probability by natural density as is done in \cite{Nymann1972469}, to prove the following theorem.

\begin{Theorem}\label{theorem2}
Let $x_{1},...,x_{k} \in \mathbb{Z}$ be randomly chosen. Then,
$$P(\gcd(x_{1},...,x_{k})=1) = \frac{1}{\zeta(k)}$$
\end{Theorem}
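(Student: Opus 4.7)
The plan is to repeat the argument in the proof of Theorem~\ref{theorem1} with the exponent $2$ replaced by $k$, and to confirm that the independence heuristic across different primes still holds in the multi-variable setting. Writing $E_p$ for the event that a fixed prime $p$ divides every one of $x_1, \dots, x_k$, the event $\gcd(x_1, \dots, x_k) = 1$ is precisely $\bigcap_{p} E_p^{c}$, so the target probability is $P\left(\bigcap_p E_p^{c}\right)$.

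First I would compute $P(E_p)$ for a single prime $p$. Since $p \mid x_i$ is a condition depending only on the residue of $x_i$ modulo $p$, and the $x_i$ are chosen independently, the $k$ divisibility events $\{p \mid x_i\}_{i=1}^{k}$ are mutually independent, each of probability $1/p$, so $P(E_p) = 1/p^k$ and $P(E_p^c) = 1 - 1/p^k$. Next I would argue that the events $E_p^c$ for distinct primes $p$ are independent: divisibility of a single integer by $p_1, \dots, p_r$ factors through its reduction modulo $p_1 p_2 \cdots p_r$, and by the Chinese Remainder Theorem the joint residue is uniformly distributed on $\prod_j \Z / p_j \Z$, so the events across different primes decouple. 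Assembling these two observations gives
$$P(\gcd(x_1, \dots, x_k) = 1) = \prod_{p \text{ prime}} \left(1 - \frac{1}{p^k}\right) = \left( \prod_{p \text{ prime}} \frac{1}{1 - p^{-k}} \right)^{-1} = \frac{1}{\zeta(k)},$$
by the Euler product expansion of $\zeta(s)$ at $s = k$ (which requires $k \geq 2$ so that the product converges absolutely).

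The main obstacle is that there is no uniform probability measure on $\Z$, so phrases like ``randomly chosen'' and the informal probability statements above need to be rigorously interpreted. Following the approach referenced as \cite{Nymann1972469}, I would define the probability of an event $A \subseteq \Z^k$ by the natural density
$$P(A) = \lim_{N \to \infty} \frac{|A \cap [-N, N]^k|}{(2N+1)^k},$$
whenever this limit exists, and then carry out the entire argument with a truncated product over primes $p \leq M$. For fixed $M$, the set of $k$-tuples $(x_1, \dots, x_k)$ with $\gcd(x_1, \dots, x_k)$ coprime to every prime $\leq M$ is a union of residue classes modulo $\prod_{p \leq M} p$, and hence has exact natural density $\prod_{p \leq M}(1 - p^{-k})$. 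The final step is to exchange the $M \to \infty$ limit with the $N \to \infty$ limit defining natural density; this is the delicate point, and I would handle it by bounding the density of tuples having a common prime factor larger than $M$ by the convergent tail $\sum_{p > M} p^{-k}$, which vanishes as $M \to \infty$ precisely because $k \geq 2$.
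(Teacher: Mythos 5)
Your argument is essentially the same as the paper's: compute the probability $1/p^k$ that a fixed prime divides all $k$ integers, pass to the complement, and take the Euler product to obtain $1/\zeta(k)$. The extra rigor you supply --- interpreting probability as natural density, using the Chinese Remainder Theorem for the truncated product over $p \leq M$, and bounding the tail by $\sum_{p>M} p^{-k}$ --- is precisely what the paper delegates to its citation of Nymann, so your write-up is a correct and more complete version of the same proof.
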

\begin{proof}
We have that $$P(\gcd(x_{1},...,x_{k})=1) = P\left(\bigcap_{i=1}^{n} p \nmid x_{i}, \forall p \right)$$ where $p$ is prime. Then for a fixed prime $p$, $$P\left(\bigcap_{i=1}^{n} p \mid x_{i}, \forall p \right) = \frac{1}{p^{k}}$$ and so we have that the probability this does not occur is $1 - \frac{1}{p^k}$. Therefore, taking the product over all primes we have that
$$P(\gcd(x_{1},...,x_{k})=1) = \prod_{p \; \text{prime}} \left(1 - \frac{1}{p^k}\right) = \left(\prod_{p \; \text{prime}} \left(\frac{1}{1-p^k}\right) \right)^{-1} = \frac{1}{\zeta(k)}$$
\end{proof}

We notice that we can consider the notions of divisibility, prime numbers, and thus relatively prime integers, in any ring $R$ which is a unique factorization domain.

\section{Relative Primality in the Gaussian Integers}
The Gaussian integers $\mathbb{Z}[i] = \{a + bi \; | \; a,b \in \mathbb{Z}\} \hookrightarrow \mathbb{C}$ are a Euclidean domain and thus a unique factorization domain. As such, we can define divisibility by $a + ib \; | \; x + iy$ if there exists $c + id \in \mathbb{Z}[i]$ such that $x + iy = (a + ib)(c + id) = (ac - bd) + i(bc + ad)$. We can now consider the problem of determining the probability of coprimality of Gaussian integers by using the extended definitions of $\gcd$ and primality in $\Z[i]$. We now cite a theorem which will aid the solution to this problem.

\begin{Theorem}\label{theorem3}
Let $z=a+bi \in \Z[i]$, then
\begin{enumerate}
\item If $a\ne 0, b\ne 0$, then $a+bi$ is prime in $\Z[i]$ iff either $a^2+b^2\equiv 1 \mod 4 $ and $a^2+b^2$ is prime in $\Z$ or $a=\pm1, b=\pm1$.
\item If $a=0, b\ne 0$, then $bi$ is prime in $\Z[i]$ iff $|b|$ is a prime in $\mathbb{Z}$ and $|b| \equiv 3 \mod 4$.
\item If $a\ne 0, b=0$, then $a$ is prime in $\Z[i]$ iff $|a|$ is a prime in $\mathbb{Z}$ and $|a| \equiv 3 \mod 4$.
\end{enumerate}
\end{Theorem}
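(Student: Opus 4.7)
My plan is to rest the entire argument on the norm map $N : \Z[i] \to \Z_{\geq 0}$ defined by $N(a+bi) = a^2+b^2$, which is multiplicative and has the convenient property that $N(\alpha) = 1$ exactly when $\alpha$ is a unit (so the units are $\pm 1, \pm i$). Since $\Z[i]$ is a Euclidean domain, hence a UFD, irreducibility and primality coincide, and I will freely switch between the two.

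First I would establish the easy lemma that if $N(\alpha)$ is a rational prime, then $\alpha$ is irreducible in $\Z[i]$, because any factorization $\alpha = \beta\gamma$ would force $N(\beta)N(\gamma)$ to be prime, making one factor a unit. This immediately gives the ``$\Leftarrow$'' direction of part (1): if $a^2+b^2$ is a rational prime (which is the content of either of the two listed alternatives, since $a^2+b^2\equiv 2 \pmod 4$ forces $|a|=|b|=1$ and gives $N=2$, while any other prime value of $a^2+b^2$ is odd and hence $\equiv 1 \pmod 4$), then $a+bi$ is prime. For the converse, if $\alpha=a+bi$ is prime with $ab\neq 0$, I would look at $\alpha\mid N(\alpha)=p_1\cdots p_k$ in $\Z$; primality of $\alpha$ forces $\alpha\mid p_j$ for some $j$, so $N(\alpha)\mid p_j^2$. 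The case $N(\alpha)=p_j^2$ would make $\alpha$ a unit multiple of the rational prime $p_j$, which is impossible under $ab\neq 0$, leaving $N(\alpha)=p_j$ prime.

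Parts (2) and (3) are symmetric (multiply by the unit $i$), so I would treat the real-integer case and show that a positive integer $n$ is prime in $\Z[i]$ exactly when $n$ is a rational prime with $n\equiv 3 \pmod 4$. If $n\equiv 3\pmod 4$ is a rational prime, any nontrivial factorization $n=\alpha\beta$ in $\Z[i]$ forces $N(\alpha)=N(\beta)=n$, which would express $n$ as a sum of two integer squares, impossible since $a^2+b^2\pmod 4\in\{0,1,2\}$; so $n$ is irreducible. Conversely, if $n$ is composite in $\Z$ it is trivially reducible in $\Z[i]$, while if $n$ is a rational prime with $n\equiv 1\pmod 4$ or $n=2$, then Fermat's two-square theorem gives $n=a^2+b^2=(a+bi)(a-bi)$, a genuine factorization into non-units.

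The only nontrivial classical input is Fermat's theorem on sums of two squares, which I would invoke as a well-known fact rather than reprove; everything else is bookkeeping with the norm and the UFD property. The one place that requires mild care is ruling out the ``$N(\alpha)=p_j^2$'' branch in the converse of part (1), since I need to confirm that a unit multiple of a rational integer in $\Z[i]$ must itself have zero real or imaginary part, which is immediate from the explicit unit list $\{\pm 1,\pm i\}$. Thus I anticipate no serious obstacle; the proof is essentially a dictionary between primes of $\Z[i]$ and the splitting behavior of rational primes, repackaged via the norm.
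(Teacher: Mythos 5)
Your proposed proof is correct and complete. Note that the paper itself offers no proof of Theorem~\ref{theorem3}: it is explicitly cited as a known classification of the Gaussian primes, so there is no argument in the text to compare against. Your norm-based route is the standard one and all the steps check out: the lemma that $N(\alpha)$ prime forces $\alpha$ irreducible handles the ``$\Leftarrow$'' direction of part (1) in both of its alternatives (since $a=\pm 1, b=\pm 1$ gives $N=2$, and any odd value of $a^2+b^2$ that is prime is automatically $\equiv 1 \bmod 4$ because $a,b$ must have opposite parity); the converse correctly reduces to $N(\alpha)\in\{p_j,p_j^2\}$ and rules out the square case by the explicit unit list $\{\pm 1,\pm i\}$, which is exactly the point requiring care; and the argument for parts (2) and (3) via the impossibility of $a^2+b^2\equiv 3 \bmod 4$ together with Fermat's two-square theorem for the reducibility of $2$ and of primes $\equiv 1 \bmod 4$ is airtight (the unit case $|a|=1$ is consistently excluded on both sides of the ``iff''). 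Your write-up would in fact strengthen the paper, which leans on this classification in Lemmas~\ref{lemma1} and~\ref{lemma3} and in Theorem~\ref{theorem4} without justifying it; the only external input you need, Fermat's theorem on sums of two squares, is already invoked by the authors later in the paper, so citing it is entirely in keeping with their level of rigor.
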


%\begin{Lemma}
%Let $z_{1},z_{2} \in \Z[i]$, then
%$$\overline{P(1+i \; | \; z_{1} \; \cap \; 1 + i \; | \; z_{2})} = 1 - \frac{1}{4}$$
%\end{Lemma}
%\begin{proof}
%Notice that $1 + i \; | \; x+iy$ and $1 - i \; | \; x + iy$ if and only if $2 \; | \; x^2 + y^2$. Since, $P(2 \; | \; x^2 + y^2) = 1/2$ %because $x^2 + y^2$ is either even or odd in an equal number of cases depending on if $x$ and $y$ are even or odd. Thus,
%$$P(1+i \; | \; z_{1} \; \cap \; 1 + i \; | \; z_{2}) = P(1 + i \; | \; z_{1}) P(1 + i \; | \; z_{2}) = \frac{1}{4}$$
%Therefore,
%$$\overline{P(1+i \; | \; z_{1} \; \cap \; 1 + i \; | \; z_{2})} = 1 - \frac{1}{4}$$
%\end{proof}

By using this theorem, we can characterize all Gaussian primes in terms of primes in $\mathbb{Z}$. First we will investigate Gaussian primes with $a=0$ or $b=0$ as in Case 2 and Case 3 of Theorem \ref{theorem3}. In these cases, what is the probability that a fixed $a+bi$ divides a random Gaussian integer $x+iy$? The following two lemmas provide and answer to this question, which implies from Theorem \ref{theorem3} that we have a characterization of Gaussian primes with $a=0$ or $b=0$ in terms of primes $p \equiv 3 \mod{4}$ in $\mathbb{Z}$.

\begin{Lemma}\label{lemma1}
Let $z_{1},z_{2} \in \Z[i]$ and let $a + 0i$ be prime in $\Z[i]$. Then,
$$\overline{P(a \; | \; z_{1} \; \cap \; a \; | \; z_{2})} = 1 - \frac{1}{a^4}$$
\end{Lemma}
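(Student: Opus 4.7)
The plan is to reduce divisibility in $\Z[i]$ by a purely real prime $a$ to a pair of independent divisibility conditions in $\Z$, and then combine these across the two Gaussian integers $z_{1}, z_{2}$ using the natural-density probability framework that underlies Theorem \ref{theorem1}.

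First I would write $z_{j} = x_{j} + i y_{j}$ with $x_{j}, y_{j} \in \Z$ for $j = 1, 2$, and observe that since $a \in \Z$,
$$a \mid z_{j} \iff \frac{z_{j}}{a} = \frac{x_{j}}{a} + i\frac{y_{j}}{a} \in \Z[i] \iff a \mid x_{j} \text{ and } a \mid y_{j}.$$
Hence the event $(a \mid z_{1}) \cap (a \mid z_{2})$ is exactly the event that $a$ simultaneously divides all four rational integers $x_{1}, y_{1}, x_{2}, y_{2}$.

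Next I would appeal to the natural-density probability on $\Z$ (rigorously set up in \cite{Nymann1972469}, and already used implicitly in Theorem \ref{theorem1}): a uniformly chosen integer is divisible by the fixed integer $a$ with probability $1/|a|$, and the four real coordinates of a uniformly chosen element of $\Z[i] \times \Z[i]$ are mutually independent. Multiplying gives
$$P\bigl((a \mid z_{1}) \cap (a \mid z_{2})\bigr) \;=\; \prod_{c \in \{x_{1}, y_{1}, x_{2}, y_{2}\}} P(a \mid c) \;=\; \left(\frac{1}{|a|}\right)^{4} \;=\; \frac{1}{a^{4}},$$
and taking complements yields the claimed $1 - 1/a^{4}$.

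The substantive obstacle is not the algebra but the rigorous justification of independence of residues modulo $a$ under natural density, since $\Z[i]$ is infinite and no uniform distribution exists on it. I would handle this exactly as in Theorem \ref{theorem1}: restrict to the expanding boxes $\{x + iy \in \Z[i] : |x|, |y| \leq N\}$, count how many of the $(2N+1)^{4}$ quadruples $(x_{1}, y_{1}, x_{2}, y_{2})$ satisfy $a \mid x_{1},\, a \mid y_{1},\, a \mid x_{2},\, a \mid y_{2}$, and let $N \to \infty$. Each coordinate equidistributes across the $a$ residue classes with error $O(1/N)$, so the joint count is $((2N+1)/a)^{4} + O(N^{3})$, giving the limit $1/a^{4}$. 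With independence established, the complement identity is immediate.
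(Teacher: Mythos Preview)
Your proposal is correct and follows essentially the same approach as the paper: reduce $a \mid z_j$ to the two integer conditions $a \mid x_j$ and $a \mid y_j$, use independence to get probability $1/a^2$ for each $z_j$ and hence $1/a^4$ for both, then take the complement. The paper's argument is terser (it simply cites the computation from Theorem~\ref{theorem1} and invokes independence of $z_1,z_2$), whereas you spell out the natural-density box count explicitly, but the substance is the same.
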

\begin{proof}
We have for $z=x+iy \in \Z[i]$ that $a + 0i \; | \; x + iy$ if and only if $a \; | \; x$ and $a \; | \; y$. We also have from the proof of Theorem \ref{theorem1} that
$$P(a \; | \; x + iy) = P(a \; | \; x \; \cap \; a \; | \; y) = \frac{1}{a^2}$$
Since $z_{1}$ and $z_{2}$ are statistically independent, this implies that
$$P(a \; | \; z_{1} \; \cap \; a \; | \; z_{2}) = P(a \; | \; z_{1}) P(a \; | \; z_{2}) = \frac{1}{a^4}$$
Therefore,
$$\overline{P(a \; | \; z_{1} \; \cap \; a \; | \; z_{2})} = 1 - \frac{1}{a^4}$$
\end{proof}

So, the probability that $a$ does not divide two random Gaussian integers is $1- \frac{1}{a^4}$ and the case for $0 + bi$ is identical. Next we must consider Gaussian primes with $a,b \neq 0$ as in Case 1 of Theorem \ref{theorem3}. In this case, what is the probability that $a+bi$ divides a random $x+iy$? The answer to this question requires a more complicated argument which uses the multiplicity lattice $\Lambda(a+bi)$ generated by $a+bi$. 

\begin{Definition}
The multiplicity lattice $\Lambda(z) \subseteq \Z \times i \Z \hookrightarrow \mathbb{C}$ of $a+bi \in \mathbb{Z}[i]$ is the sub-lattice of $\Z \times i \Z$ with a basis given by $\{a+bi,i(a+bi)\}$.
\end{Definition}

In particular, we consider the ratio of the points that fall on the multiplicity lattice of $a+bi$ to the total points in order to obtain the probability that a prime $a+bi$ with $a,b \neq 0$ divides a random $x+iy$. To do this we must introduce a new definition.

\begin{figure}[h!]
\begin{center}
\includegraphics[scale=0.39]{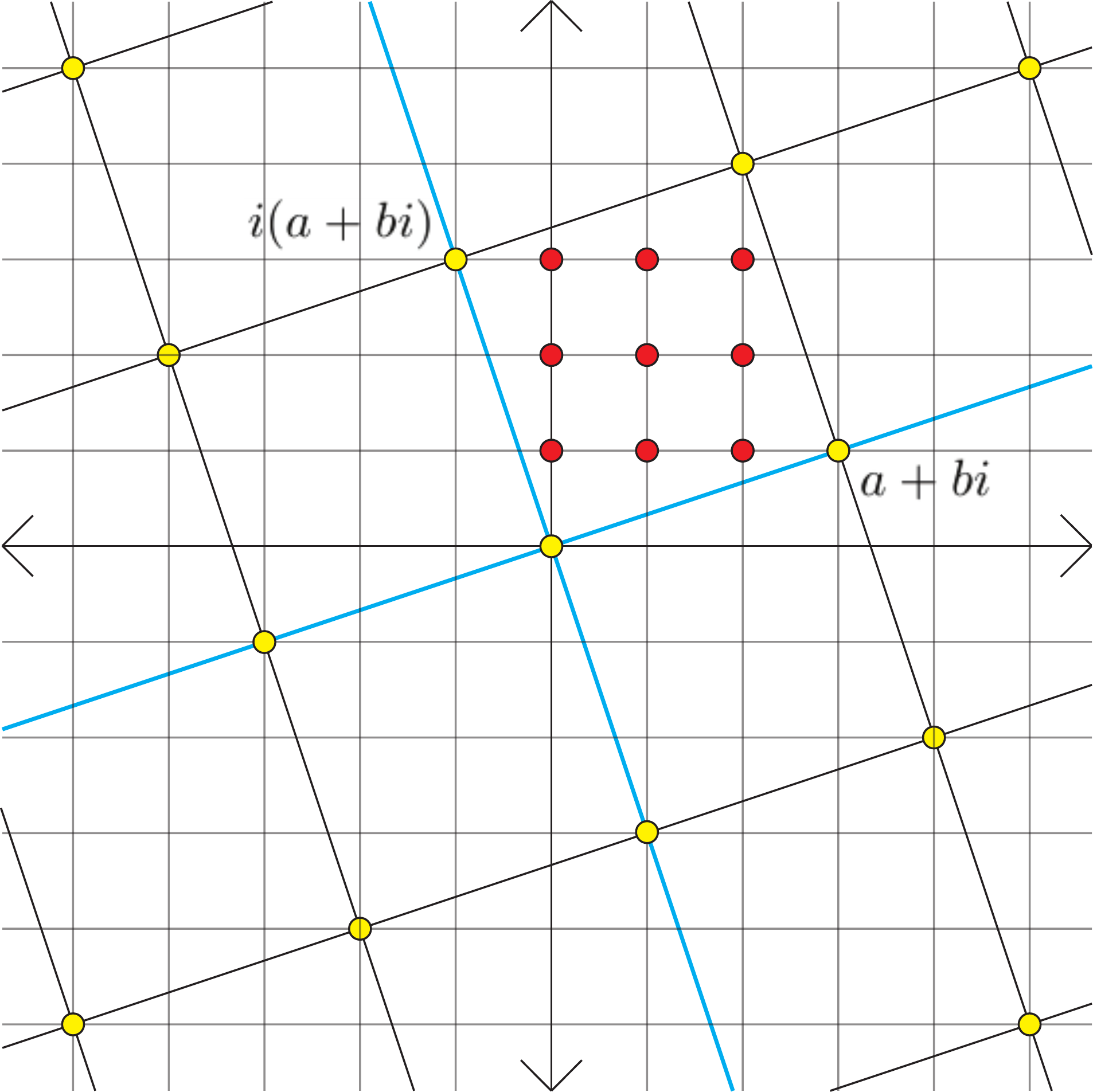}
\caption{Elements of the multiplicity lattice $\Lambda(a+bi) \hookrightarrow \C$ are denoted by yellow points, interior points of the fundamental domain with the origin as the base vertex are denoted by red points, and the blue axis represents strictly real and imaginary multiples of $a+bi$, respectively.}
\end{center}
\end{figure}

\begin{Definition}
The fundamental domain $\Gamma \subset \Z \times i \Z$ of $\Lambda(a+bi)$ with the origin as the base vertex is the square with side vectors given by $a+bi$ and $i(a+bi)$. If $x+iy \in \Lambda(a+bi)$, the fundamental domain $\Gamma_{x+iy}$ with base vertex $x+iy$ is an affine translate of $\Gamma$ from the origin to $x+iy$.
\end{Definition}

A fundamental domain of $\Lambda(a+bi)$ is always a square, and is thus a simple polygon for which a standard result in geometry known as Pick's Theorem, originally proved in 1899 by Georg Pick \cite{47270}, applies.

\begin{Theorem}
\emph{(Pick's Theorem)\\
}Let $P$ be a simple polygon on an integer lattice. Then,
$$A(P) = I(P) + \frac{B(P)}{2} - 1$$
where $A(P)$ is the area of $P$, $I(P)$ is the number of interior lattice points of $P$, and $B(P)$ is the number of boundary lattice points of $P$.
\end{Theorem}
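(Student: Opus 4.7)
The plan is to prove Pick's Theorem in three stages by establishing the formula first on a simple base case (axis-aligned lattice rectangles), then extending it to arbitrary lattice triangles, and finally to all simple lattice polygons. The central tool will be an \emph{additivity lemma} for the quantity $f(P) := I(P) + B(P)/2 - 1$: if $P = P_{1} \cup P_{2}$ where $P_{1}$ and $P_{2}$ are simple lattice polygons whose intersection is a single common edge containing $k \geq 2$ lattice points, then $f(P) = f(P_{1}) + f(P_{2})$. Since ordinary area is additive in the same sense, once $f = A$ is verified on the chosen building blocks, the formula will propagate along any lattice decomposition.

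First I would verify the rectangle case directly: for an axis-aligned $m \times n$ lattice rectangle $R$ we have $A(R) = mn$, $I(R) = (m-1)(n-1)$, and $B(R) = 2(m+n)$, and a short calculation gives $f(R) = mn$. Second, I would prove the additivity lemma by tracking lattice points carefully under the gluing operation: of the $k$ lattice points on the shared edge, the $k-2$ strictly interior ones become interior points of $P$ while the two endpoints remain on $\partial P$, and every other lattice point of $P_{1}$ or $P_{2}$ retains its type; the $B/2$ terms together with the two constant $-1$ contributions combine to yield exactly $f(P) = f(P_{1}) + f(P_{2})$. Third, I would use the rectangle case together with additivity to handle an axis-aligned right lattice triangle by cutting a rectangle along its diagonal (the diagonal may carry additional lattice points, which is precisely what the additivity lemma is designed to absorb), and then reach an arbitrary lattice triangle $T$ by inscribing it in its axis-aligned bounding rectangle and subtracting the at most three axis-aligned right triangles cut off at the corners.

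For the general case I would triangulate any simple lattice polygon into lattice triangles using only its own vertices (possible by the two-ears theorem for simple polygons) and then apply the additivity lemma inductively to reduce $f$ and $A$ on the full polygon to the corresponding sums over the triangles, where the previous step has already established the formula. The main obstacle I anticipate is the careful lattice-point bookkeeping inside the additivity lemma: one must handle the possibility that the shared edge carries an arbitrary number of collinear lattice points and verify that these change status from boundary to interior under the gluing without any double-counting. Once this accounting is done rigorously, the remaining deductions are essentially mechanical and the theorem follows.
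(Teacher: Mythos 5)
Your proposal is correct in outline, but note that the paper does not actually prove this statement: Pick's Theorem is quoted as a known result and attributed to Pick's 1899 paper via a citation, so there is no internal proof to compare against. What you have written is the standard elementary proof strategy (additivity of $f(P)=I(P)+B(P)/2-1$ under edge-gluing, verification on axis-aligned rectangles, extension to right triangles by halving a rectangle, to general lattice triangles via the bounding box, and to arbitrary simple lattice polygons via an ear-based triangulation), and it is sound. Two small points deserve care if you flesh it out: the bounding-box step uses the additivity lemma in its subtractive form, i.e.\ you infer $f(T)=f(R)-\sum f(T_j)$ from $f(R)=f(T)+\sum f(T_j)$, and in some configurations the complement of $T$ in its bounding box requires a rectangle in addition to the right triangles, so your ``at most three right triangles'' should be stated as ``right triangles and possibly rectangles.'' Also, the claim that every lattice point other than those interior to the shared edge ``retains its type'' is the crux of the additivity lemma and rests on the fact that a point interior to $P_1$ or $P_2$ remains interior to $P_1\cup P_2$ while a boundary point of $P_1\cup P_2$ must lie on $\partial P_1\cup\partial P_2$ but not in the open shared edge; this is worth writing out explicitly. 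With those details supplied, your argument is a complete and self-contained proof, which is strictly more than the paper provides for this theorem.
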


The following lemma allows us to apply Pick's Theorem to count the interior lattice points of any fundamental domain $\Gamma_{x+iy}$ of  $\Lambda(a+bi)$.

\begin{Lemma}\label{lemma2}
The line segment containing the lattice points $x_{1} + iy_{1}$ and $x_{2} + iy_{2}$ contains no other lattice points if $\gcd(x_{2} - x_{1},y_{2} - y_{1}) = 1$.
\end{Lemma}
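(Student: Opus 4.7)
The plan is to argue by contradiction: suppose a third lattice point $(x_3,y_3)$ lies on the open segment joining $(x_1,y_1)$ and $(x_2,y_2)$, and derive a contradiction with the hypothesis $\gcd(x_2-x_1,y_2-y_1)=1$. The key idea is that any point on the segment admits a linear parametrization, and integrality of the coordinates will force the parameter to have a particular arithmetic form.

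First I would write $(x_3,y_3) = (x_1+t(x_2-x_1),\; y_1+t(y_2-y_1))$ for some $t \in (0,1)$. Since $(x_1,y_1)$ and $(x_3,y_3)$ both have integer coordinates, we get that $t(x_2-x_1)$ and $t(y_2-y_1)$ are integers. Writing $t=p/q$ in lowest terms with $0<p<q$ and $\gcd(p,q)=1$, the integrality of $t(x_2-x_1)$ and $t(y_2-y_1)$ forces $q \mid (x_2-x_1)$ and $q \mid (y_2-y_1)$, since $p$ shares no common factor with $q$.

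Then I would invoke the hypothesis: $q$ divides both $x_2-x_1$ and $y_2-y_1$, so $q \mid \gcd(x_2-x_1,y_2-y_1) = 1$, giving $q=1$. But this contradicts $0<p<q=1$, which has no integer solutions for $p$. Hence no such intermediate lattice point $(x_3,y_3)$ can exist, completing the proof.

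The argument is elementary and I do not anticipate any serious obstacle; the only subtlety is being careful with the lowest-terms reduction of $t$ and ensuring that the strict inequalities $0<t<1$ are preserved so that the final contradiction is clean. One stylistic note is that the lemma as stated actually gives a sufficient condition; the converse also holds (if the segment contains no interior lattice point then the gcd of the differences is $1$), but since only the stated direction is needed for the subsequent application of Pick's Theorem to fundamental domains of $\Lambda(a+bi)$, I would not pursue the converse here.
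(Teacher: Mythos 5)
Your argument is correct and complete, and it is worth noting that the paper itself does not actually prove this lemma: it simply defers to Lemma 5.7 of an external reference. Your self-contained parametrization argument is the standard proof of this fact and fills that gap directly. One small point you should make explicit before writing $t=p/q$: you need $t$ to be rational. This does follow, because $t(x_{2}-x_{1})=x_{3}-x_{1}$ and $t(y_{2}-y_{1})=y_{3}-y_{1}$ are integers and at least one of $x_{2}-x_{1}$, $y_{2}-y_{1}$ is nonzero (if both vanished the gcd would be $0$, not $1$), so $t$ is a quotient of two integers; this also disposes of the degenerate case of coincident endpoints. With that observed, your chain $q\mid(x_{2}-x_{1})$, $q\mid(y_{2}-y_{1})$, hence $q\mid\gcd(x_{2}-x_{1},y_{2}-y_{1})=1$, so $q=1$, cleanly contradicts $0<p<q$. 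Your closing remark is also accurate: the converse holds as well (if $d=\gcd(x_{2}-x_{1},y_{2}-y_{1})>1$ then $(x_{1}+\tfrac{x_{2}-x_{1}}{d},\,y_{1}+\tfrac{y_{2}-y_{1}}{d})$ is an interior lattice point of the segment), but only the stated direction is needed for the boundary count $B(\Gamma_{x+iy})=4$ in the application of Pick's Theorem.
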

\begin{proof}
See the proof of Lemma 5.7 in \cite{Sally}.
\end{proof}

Combining these results, we are able to handle the Gaussian primes in Case 1 of Theorem \ref{theorem1} as follows.

\begin{Lemma}\label{lemma3}
Let $z_{1},z_{2} \in \Z[i]$ and let $a + bi$ be prime in $\Z[i]$ with $a,b \neq 0$. Then,
$$\overline{P(a + bi \; | \; z_{1} \; \cap \; a + bi \; | \; z_{2})} = 1 - \frac{1}{(a^2 + b^2)^2}$$
\end{Lemma}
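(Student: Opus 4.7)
The plan is to interpret divisibility geometrically: $a+bi \mid x+iy$ in $\mathbb{Z}[i]$ if and only if $x+iy \in \Lambda(a+bi)$, since the multiplicity lattice is exactly the set of $\mathbb{Z}[i]$-multiples of $a+bi$. Consequently the probability that a random Gaussian integer is divisible by $a+bi$ equals the natural density of $\Lambda(a+bi)$ inside $\mathbb{Z}\times i\mathbb{Z}$, which in turn equals the reciprocal of the number of $\mathbb{Z}[i]$-lattice points contained in a single fundamental domain $\Gamma$ (counted once per tile, e.g.\ interior points plus one of the four corners).

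To evaluate that count, I would apply Pick's Theorem to $\Gamma$. The fundamental domain is the square with side vectors $a+bi$ and $i(a+bi)$, each of length $\sqrt{a^2+b^2}$, so $A(\Gamma)=a^2+b^2$. To use Pick I need the boundary count $B(\Gamma)$. Each of the four sides is a translate of the segment from $0$ to $a+bi$ (or to $i(a+bi)$), and by Lemma~\ref{lemma2} such a segment contains no interior lattice point provided $\gcd(a,b)=1$. The step I view as the main obstacle is justifying this gcd condition cleanly: if $d:=\gcd(a,b)>1$, then $d$ would be a nonunit integer factor of $a+bi$ in $\mathbb{Z}[i]$, contradicting the hypothesis that $a+bi$ is a Gaussian prime. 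Hence the only boundary lattice points are the four corners, $B(\Gamma)=4$.

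Pick's Theorem then gives
$$I(\Gamma)=A(\Gamma)-\tfrac{B(\Gamma)}{2}+1=a^2+b^2-1,$$
and so a fundamental domain contains exactly $a^2+b^2$ lattice points of $\mathbb{Z}\times i\mathbb{Z}$ per tile (the $a^2+b^2-1$ interior points together with one representative corner). Therefore
$$P\bigl(a+bi \;|\; x+iy\bigr)=\frac{1}{a^2+b^2}.$$
This is of course just the statement that the index $[\mathbb{Z}[i]:\Lambda(a+bi)]$ equals the ideal norm $N(a+bi)=a^2+b^2$, but the Pick-theoretic derivation is the one prefigured by the earlier definitions and figure.

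Finally, since $z_1$ and $z_2$ are chosen independently, the events $\{a+bi\mid z_1\}$ and $\{a+bi\mid z_2\}$ are independent, and I would conclude
$$P\bigl(a+bi\;|\;z_1 \;\cap\; a+bi\;|\;z_2\bigr)=\frac{1}{(a^2+b^2)^2},$$
whence the complement probability equals $1-\tfrac{1}{(a^2+b^2)^2}$, as claimed. The only subtlety beyond the gcd observation is the bookkeeping that turns a lattice-point count in $\Gamma$ into a density; I would handle this by taking a large box $[-N,N]\times i[-N,N]$, tiling it (up to boundary error of order $N$) by translates of $\Gamma$, and letting $N\to\infty$ so that the boundary error washes out.
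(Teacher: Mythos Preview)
Your proposal is correct and follows essentially the same approach as the paper: interpret divisibility via the multiplicity lattice, apply Pick's Theorem together with Lemma~\ref{lemma2} (using $\gcd(a,b)=1$, which follows from primality of $a+bi$) to count $a^2+b^2$ lattice points per fundamental tile, and then invoke independence of $z_1$ and $z_2$. Your explicit justification that $\gcd(a,b)>1$ would yield a nontrivial factorization, and your limiting-box argument for the density, are welcome elaborations but do not depart from the paper's route.
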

\begin{proof}
We have that $a+bi \; | \; z_{1}$ and $a + bi \; | \; z_{2}$ if and only if $z_{1},z_{2} \in \Lambda(a+bi)$. We now use Pick's Theorem on an arbitrary fundamental domain $\Gamma_{x+iy}$ of $\Lambda(a+bi)$. Since $\Gamma_{x+iy}$ is an affine translate of the fundamental domain $\Gamma$, we have by Lemma \ref{lemma2} that $B(\Gamma_{x+iy}) = 4$ since $\gcd(a,b)=1$ because $a+bi$ is prime. Furthermore, $A(\Gamma_{x+iy}) = a^2 + b^2$ since $\Gamma_{x+iy}$ is a square with side lengths given by $\sqrt{a^2 + b^2}$. Thus, by Pick's Theorem we have that $I(\Gamma_{x+iy}) = a^2 + b^2 - \frac{4}{2} + 1 = a^2 + b^2 - 1$. 

Since $z_{1} \in \Z[i]$ we have that $z_{1} \in \Gamma_{x+iy}$ for some base vertex $x+iy \in \Lambda(a+bi)$. Then either $z_{1}$ is an interior point of $\Gamma_{x+iy}$ or $z_{1} = x+iy$. Since there are $I(\Gamma_{x+iy}) + 1 = a^2 + b^2$ possibilities for $z_{1}$, the probability that $z_{1}$ is the base vertex of $\Gamma_{x+iy}$ is $\frac{1}{a^2 + b^2}$. This similarly occurs for $z_{2}$ and thus,
$$P(a + bi \; | \; z_{1} \; \cap \; a + bi \; | \; z_{2}) = P(z_{1},z_{2} \in \Lambda(a+bi)) = P(z_{1} \in \Lambda(a+bi)) P(z_{2} \in \Lambda(a+bi)) = \frac{1}{(a^2 + b^2)^2}$$
Therefore,
$$\overline{P(a + bi \; | \; z_{1} \; \cap \; a + bi \; | \; z_{2})} = 1 - \frac{1}{(a^2 + b^2)^2}$$
\end{proof}

We first remark that any odd prime number $p$ can be expressed as $p = x^2 + y^2$, where $x,y \in \Z$, if and only if $p \equiv 1 \mod{4}$. This result is known as Fermat's theorem on sums of two squares and we can use it to characterize the Gaussian primes in Case 1 of Theorem \ref{theorem1}. Cumulatively, we have that Lemma \ref{lemma1} characterizes the inert Gaussian primes, and Lemma \ref{lemma3} characterizes the split and ramified Gaussian primes. Now that we have all the individual pieces, we can combine them to find the probability that $gcd(z_1,z_2)=1$ for randomly chosen $z_1,z_2\in \Z[i]$.

\begin{Theorem}\label{theorem4}
Let $z_{1},z_{2} \in \Z[i]$ be randomly chosen. Then,
$$P(\gcd(z_{1},z_{2})=1) = \frac{1}{\zeta_{\Q(i)}(2)}$$
where $\zeta_{\mathbb{Z}[i]}(s)$ is the Dedekind zeta function over the Gaussian integers.
\end{Theorem}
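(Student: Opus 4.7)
The plan is to mimic the proof of Theorem \ref{theorem1}, replacing the role of rational primes by Gaussian primes and the role of natural density in $\Z$ by natural density in the Gaussian lattice $\Z[i]\hookrightarrow\R^2$. The starting point is the observation that $\gcd(z_1,z_2)=1$ in the unique factorization domain $\Z[i]$ if and only if no Gaussian prime $\pi$ (up to associates) simultaneously divides $z_1$ and $z_2$. Appealing to a Chinese Remainder Theorem style argument in $\Z[i]$ to justify independence of the divisibility events across distinct (non-associate) Gaussian primes, I would write
\[
P(\gcd(z_1,z_2)=1)=\prod_{[\pi]}\overline{P(\pi\mid z_1\cap\pi\mid z_2)},
\]
where $[\pi]$ ranges over associate classes of Gaussian primes.

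Next I would partition the associate classes according to Theorem \ref{theorem3}. For the inert primes $p\equiv 3\pmod 4$ (Cases 2 and 3), Lemma \ref{lemma1} gives the factor $1-\tfrac{1}{p^4}$, which equals $1-N(p)^{-2}$ since the algebraic norm of an inert prime is $p^2$. For Gaussian primes $a+bi$ with $a,b\neq 0$ (Case 1), Lemma \ref{lemma3} gives $1-\tfrac{1}{(a^2+b^2)^2}=1-N(a+bi)^{-2}$; by Fermat's two-squares theorem these contribute exactly the split primes above each $p\equiv 1\pmod 4$ (counted once per associate class, i.e.\ twice per rational prime $p$) together with the single ramified prime $1+i$ above $2$. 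In every case the factor is $1-N(\pi)^{-2}$.

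Finally, since $\Z[i]$ is a principal ideal domain, the associate classes of prime elements are in bijection with the nonzero prime ideals $\mathfrak{p}$ of $\Z[i]$, so the product is the inverse of the Euler product
\[
\zeta_{\Q(i)}(s)=\prod_{\mathfrak{p}}\frac{1}{1-N(\mathfrak{p})^{-s}}
\]
evaluated at $s=2$. Comparing termwise then yields $P(\gcd(z_1,z_2)=1)=1/\zeta_{\Q(i)}(2)$.

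The main obstacle I anticipate is the independence step: the passage from a finite intersection of divisibility events to the infinite product over all Gaussian primes is not a triviality, and it requires either a Chinese Remainder Theorem argument modulo products of distinct prime ideals in $\Z[i]$ together with a truncation/tail estimate, or, as in Nymann's treatment \cite{Nymann1972469} for the rational case, a careful interpretation of ``random'' $z_1,z_2\in\Z[i]$ as a limit of the natural density on expanding boxes $\{a+bi:|a|,|b|\le N\}\subset\Z[i]$. A minor but necessary bookkeeping issue is to ensure that each prime ideal is counted exactly once — in particular that the two non-associate primes $\pi,\bar\pi$ above a split rational prime $p\equiv 1\pmod 4$ both appear in the product, while their common norm $p$ appears twice, matching the factorization of $\zeta_{\Q(i)}(s)$.
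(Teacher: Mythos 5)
Your proposal is correct, and its skeleton coincides with the paper's: the same product over associate classes of Gaussian primes, the same appeal to Lemma \ref{lemma1} for the inert case and Lemma \ref{lemma3} for the split and ramified cases, and the same careful bookkeeping that a split rational prime $p\equiv 1\pmod 4$ contributes two non-associate primes $\pi,\bar\pi$ (hence the factor $\left(1-\tfrac{1}{p^2}\right)^2$) while $1+i$ and $1-i$ are associates and are counted once. Where you genuinely diverge is the last step. The paper assembles the explicit real product
$$\left(1-\tfrac{1}{2^2}\right)\prod_{p\equiv 1 \bmod 4}\left(1-\tfrac{1}{p^2}\right)^{2}\prod_{p\equiv 3 \bmod 4}\left(1-\tfrac{1}{p^4}\right)$$
and identifies it as $\bigl(\zeta(2)L(2,\chi)\bigr)^{-1}$ using the nontrivial character mod $4$, then invokes the factorization $\zeta_{\Q(i)}(s)=\zeta(s)L(s,\chi)$. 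You instead observe that every factor, in every case, is $1-N(\pi)^{-2}$, and that since $\Z[i]$ is a PID the associate classes of primes are exactly the nonzero prime ideals, so the product is literally the inverse of the Euler product defining $\zeta_{\Q(i)}(2)$. Your route is more direct and conceptually cleaner --- it makes the appearance of the Dedekind zeta function a tautology of the Euler product rather than a consequence of the character decomposition, and it transfers verbatim to the generalization stated at the end of the paper --- whereas the paper's route has the side benefit of exhibiting the value concretely as $\zeta(2)L(2,\chi)$. One further point in your favor: you explicitly flag that the passage from finitely many divisibility conditions to the infinite product requires a natural-density or truncation argument in the spirit of Nymann; the paper asserts this step without justification, so your caveat identifies a real gap in the published argument rather than one in your own.
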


\begin{proof}
We have that
\begin{align*}
P(\gcd(z_{1},z_{2}) = 1) &= \overline{P(a+bi \; | \; z_{1} \; \cap \; a + bi \; | \; z_{2})}=\prod_{a+bi \; \text{prime}} \overline{P(a+bi \; | \; z_{1} \cap a+bi \; | \; z_{2})}
\end{align*}
We now decompose this product into the cases mentioned in Theorem \ref{theorem1}. If $a+bi$ is a Gaussian prime with $a=0$ or $b=0$, then $p=|a+bi|$ for some $p \equiv 3 \mod{4}$ in $\Z$, and so by Lemma \ref{lemma1} the probability that $a+bi$ does not divide $z_{1}$ and $z_{2}$ is $1 - \frac{1}{p^4}$. If $a+bi$ is a Gaussian prime with $a,b \neq 0$, then $p = |a+bi| = a^2 + b^2$ for some $p \equiv 1 \mod{4}$. This means we can start replacing $a^2+b^2$ with $p$ and span over all the primes $p \equiv 1 \mod 4$ in $\Z$. But we need to be very careful when doing this, for $\frac{1}{a^2+b^2}$ is not just the probability that $a+bi$ divides a random $x+yi$, but also the probability that $a-bi$,$-a+bi$, and $-a-bi$ divide $x+yi$. We need to figure out how many distinct cases this is, i.e. how many times we need to span over primes of the form $a^2+b^2$. Well, clearly if $a+bi$ divides some Gaussian integer, then $-a-bi$ does too. Likewise, if $a-bi$ divides a Gaussian integer, then $-a+bi$ does too. So, we are left to consider when divisibility by $a+bi$ and $a-bi$ is equivalent, that is for when a prime is ramified. By Lemma \ref{lemma3}, the probability that $a+bi$ does not divide $z_{1}$ and $z_{2}$ is $1 - \frac{1}{(a^2 + b^2)^2}$. If $p \equiv 1 \mod{4}$ then $p$ is a split prime and there are two Gaussian integers whose distinct probabilities need to be considered, and $p=2$ is the only ramified prime for which there are two Gaussian integers, namely $1+i$ and $1-i$, whose probabilities need to only be considered once. Taking the product over all primes in each case we have the following sequence of equalities proves the theorem.
\begin{align*}
P(\gcd(z_{1},z_{2}) = 1) &= \left(1 - \frac{1}{2^2}\right) \prod_{p \equiv 1 \; \text{mod} \; 4} \left(1 - \frac{1}{p^2} \right)^2 \prod_{p \equiv 3 \; \text{mod} \; 4} \left(1 - \frac{1}{p^4}\right) \\
&= \frac{1}{\zeta(2)} \prod_{p \equiv 1 \; \text{mod} \; 4} \left(1 - \frac{1}{p^2}\right) \prod_{p \equiv 3 \; \text{mod} \; 4} \left(1 + \frac{1}{p^2}\right) = \frac{1}{\zeta(2)} \prod_{p \; \text{prime}}\left(1 - \frac{\chi(p)}{p^2}\right) \\
&= \left(\zeta(2) \sum_{n=1}^{\infty} \frac{\chi(n)}{n^2}\right)^{-1} = \frac{1}{\zeta(2)L(2,\chi)} = \frac{1}{\zeta_{\Q(i)}(2)}
\end{align*}
where $$\chi(n) =
\begin{cases}
1 \;\;\;\;\;\; \text{if } n \equiv 1 \mod{4} \\
-1 \;\;\;\; \text{if } n \equiv 3 \mod{4}
\end{cases}$$
is a character of $\Z[i]$, $L(2,\chi)$ is the Dirichlet L-function attached to $\chi$, and $\zeta_{\Q(i)}(s)$ is the Dedekind zeta function over the Gaussian integers.
\end{proof}

We note that this result generalizes as follows, where an adapted proof can be given by relating the properties of the multiplicity lattice to the group of units of the ring.

\begin{Theorem}
Let $K$ be a number field $K$ whose ring of integers is a principal ideal domain. Then,
$$P(\gcd(z_{1},...,z_{k})=1) = \frac{1}{\zeta_{K}(k)}$$
where $z_{1},...,z_{k} \in K$ are randomly chosen and $\zeta_{K}(s)$ is the Dedekind zeta function over $K$.
\end{Theorem}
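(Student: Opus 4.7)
The plan is to follow the same three-step recipe as Theorem \ref{theorem4}: first compute the probability that a fixed prime divides a random element of $\mathcal{O}_K$, next extend this to $k$ independent elements, and finally take the product over all primes and recognize the answer as $1/\zeta_K(k)$ via the Euler product for the Dedekind zeta function. The PID hypothesis ensures that every prime ideal $\mathfrak{p} \subset \mathcal{O}_K$ is of the form $(\pi)$ for some prime element $\pi$, so we may speak equivalently in terms of prime elements and prime ideals, thus avoiding the Gaussian-style case split into inert, split, and ramified primes.

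First I would fix an integral basis of $\mathcal{O}_K$ and realize $\mathcal{O}_K \hookrightarrow \R^n$ (with $n = [K:\Q]$) as a full-rank lattice via the Minkowski embedding, then define probability on $\mathcal{O}_K$ as natural density with respect to an exhausting family of boxes in $\R^n$. The key lattice-theoretic step is the following generalization of Lemma \ref{lemma3}: for a prime element $\pi \in \mathcal{O}_K$, the multiplicity lattice $\Lambda(\pi) := \pi \mathcal{O}_K$ is a sublattice of $\mathcal{O}_K$ whose fundamental domain has volume $N(\mathfrak{p}) = |\mathcal{O}_K/(\pi)|$ times that of a fundamental domain of $\mathcal{O}_K$, and therefore contains exactly $N(\mathfrak{p})$ points of $\mathcal{O}_K$, of which exactly one lies in $\Lambda(\pi)$. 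This bypasses the two-dimensional Pick's-theorem count and yields $P(\pi \mid z) = 1/N(\mathfrak{p})$ directly from the index of the sublattice. The unit group $\mathcal{O}_K^\times$ enters precisely to ensure $\Lambda(\pi)$ depends only on the ideal $(\pi)$ and not on the choice of generator, so that the enumeration over prime ideals agrees with the enumeration over associate classes of prime elements.

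Combining independence of the $z_i$ with a Chinese-Remainder-Theorem argument showing that divisibility at distinct primes is asymptotically independent, the complementary probabilities multiply to give
$$P(\gcd(z_1, \ldots, z_k) = 1) = \prod_{\mathfrak{p}} \left( 1 - \frac{1}{N(\mathfrak{p})^k} \right) = \frac{1}{\zeta_K(k)},$$
where the last equality is the Euler product $\zeta_K(s) = \prod_{\mathfrak{p}} (1 - N(\mathfrak{p})^{-s})^{-1}$, convergent for $\Re(s) > 1$ and in particular at $s = k \geq 2$.

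The main obstacle will be the rigorous justification of the natural density and the interchange of the density limit with the infinite product. Concretely, one truncates $\mathcal{O}_K$ to the lattice points lying in a growing box $[-N,N]^n$, computes the finite-sample probability that $\gcd(z_1,\ldots,z_k) = 1$ by a CRT-style sieve over all prime ideals of norm at most some cutoff $M$, and controls the tail contribution from $N(\mathfrak{p}) > M$ uniformly in $N$. The bound $\sum_{\mathfrak{p}} N(\mathfrak{p})^{-k} < \infty$ for $k \geq 2$ is what makes the tail estimate work, in analogy with Nymann's treatment over $\Z$ in \cite{Nymann1972469}, but the bookkeeping is more delicate here: one must navigate the potentially infinite unit group $\mathcal{O}_K^\times$ (infinite in every case except $\Q$ and the imaginary quadratic fields, by Dirichlet's unit theorem) to ensure that the density-based probability remains well-defined despite the multiplicity with which each prime ideal is generated.
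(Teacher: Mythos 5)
Your proposal is correct in its essentials, and it takes a genuinely different route from the paper -- indeed, the paper offers no proof of this theorem at all, only the one-sentence remark that ``an adapted proof can be given by relating the properties of the multiplicity lattice to the group of units of the ring,'' so your sketch is strictly more substantive. Where the paper's proof of Theorem \ref{theorem4} is tied to dimension two -- it counts lattice points in the fundamental domain of $\Lambda(a+bi)$ via Pick's Theorem and then reassembles the product over Gaussian primes by sorting rational primes into split, inert, and ramified classes, recovering $\zeta_{\Q(i)}(2)=\zeta(2)L(2,\chi)$ -- you replace both steps with constructions that work in any degree: the count $P(\pi \mid z)=1/N(\mathfrak{p})$ follows from the index $[\mathcal{O}_K : \pi\mathcal{O}_K]=N(\mathfrak{p})$ of the sublattice under the Minkowski embedding (the right move, since Pick's Theorem has no direct analogue above dimension two), and the Euler product is taken directly over prime ideals, so no splitting analysis or $L$-function factorization is needed. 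What the paper's method buys is concreteness and the explicit identity $\zeta_{\Q(i)}(2)=\zeta(2)L(2,\chi)$ as a byproduct; what yours buys is uniformity over all number fields and a cleaner accounting, since the event $\pi\mid z$ visibly depends only on the ideal $(\pi)$, which also defuses the unit-group worry you raise at the end (the PID hypothesis is needed only so that ``$\gcd(z_1,\dots,z_k)=1$'' makes sense at the level of elements; the density computation itself is purely ideal-theoretic). You are also right that the real work, which both your sketch and the paper defer, is the Nymann-style justification of natural density and the tail estimate $\sum_{\mathfrak{p}} N(\mathfrak{p})^{-k}<\infty$ permitting the interchange of the box limit with the product over primes; and note that the $z_i$ should be drawn from $\mathcal{O}_K$, not from $K$ as the statement literally says.
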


\bibliography{numberbib}{}
\bibliographystyle{plain}

\end{document}